\theoremstyle{plain}
\newtheorem{Th}{Theorem}%[section]
\newtheorem{Cor}[Th]{Corollary}
\newtheorem{Pro}[Th]{Proposition}
 \theoremstyle{definition}
\newtheorem{?}[Th]{Problem}
\newcommand{\R}{\mathbb{R}}
\newcommand{\PP}{\mathbb{P}}\newcommand{\D}{\mathbb{D}}\newcommand{\B}{\mathbb{B}}
\begin{document}

\title[]
{On endomorphisms of the Einstein gyrogroup in arbitrary dimension}

\author[P. E. Frenkel]{P\'eter E. Frenkel}

\address{E\"{o}tv\"{o}s Lor\'{a}nd University \\ Department of Algebra and Number Theory
 \\ H-1117 Budapest
 \\ P\'{a}zm\'{a}ny P\'{e}ter s\'{e}t\'{a}ny 1/C \\ Hungary \\ and R\'enyi Institute of Mathematics, Hungarian Academy of Sciences \\ 13-15 Re\'altanoda utca \\ H-1053 Budapest}
\email{frenkel.peter@renyi.mta.hu}

\thanks{Research partially supported by ERC Consolidator Grant 648017 and by Hungarian National Foundation for Scientific Research (OTKA), grant no. K109684.}

 \subjclass[2010]{}

 \keywords{}

\begin{abstract}We determine the automorphisms and the continuous
  endomorphisms of the Einstein gyrogroup in arbitrary dimension.  This generalizes a recent result
  of L.\ Moln\'ar and D.\ Virosztek, who have determined the continuous
  endomorphisms in the three-dimensional case.
\end{abstract}

\maketitle

\section{The Einstein gyrogroup}

The $n$-dimensional Einstein gyrogroup is the open unit ball $\B^n$ in $\R^n$,
endowed with the binary operation of velocity addition from the special theory
of relativity (with the speed of light taken to be 1):
$$u\oplus
v=\frac1{1+(u,v)}\left(u+\sqrt{1-|u|^2}\cdot
  v+\frac{(u,v)}{1+\sqrt{1-|u|^2}}\cdot u\right)%\qquad
   % (|u|, |v|<1)
.$$
Here $(u,v)$ is the inner product of $u$ and $v$, and $|u|=\sqrt {(u,u)}$ is
the usual Euclidean norm.

Note that $|u\oplus v|<1$ if $|u|<1$ and $|v|<1$, so $\left(\B^n, \oplus\right)$ is  an
algebraic structure. It satisfies certain axioms that make it a  gyrogroup \cite{U}. We
shall not need all of the axioms, but let us observe that $u\oplus 0=0\oplus u=u$ and $u\oplus
(-u)=0$ for all $u\in \B^n$. The operation $\oplus$ is not associative, but
$(-u)\oplus (u \oplus v)=v$ holds for all $u$
and $v$ in $\B^n$.

The Einstein gyrogroup is closely related to hyperbolic geometry. If we think
of $\B^n$ as the Cayley--Klein--Beltrami model of hyperbolic $n$-space, then
the map $v\mapsto u\oplus v$ is an isometry of hyperbolic $n$-space for any
fixed $u$. When $u\ne 0$, this isometry maps the halfline starting at 0 and
passing through $u$ (henceforth referred to as halfline $0u$) onto its sub-halfline starting at $u$.

This implies a well-known fact about commutativity in the Einstein gyrogroup:

\begin{Pro}\label{comm} Let $u, v\in\B^n$. Then the  equality $u\oplus v=v\oplus u$ holds if and only if $u$ and $v$ are linearly dependent
(in the usual sense of vector algebra in $\R^n$).
\end{Pro}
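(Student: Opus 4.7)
My plan is to reduce the statement to a two-dimensional question: the explicit formula shows that $u\oplus v$ is a linear combination of $u$ and $v$, and by the symmetry of the formula so is $v\oplus u$. Hence the whole question lives inside the plane $\mathrm{span}(u,v)$.

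For the easy direction, if $u$ and $v$ are linearly dependent they lie on a common line $\ell$ through the origin. Under the Cayley--Klein--Beltrami interpretation, $\ell\cap\B^n$ is a single hyperbolic geodesic, and the isometry $w\mapsto u\oplus w$ maps this geodesic into itself by the halfline property recalled just above the proposition; the same holds for $w\mapsto v\oplus w$. So both $u\oplus v$ and $v\oplus u$ lie in $\ell$, and the restriction of $\oplus$ to $\ell$ is (in a suitable linear parameterization) the one-dimensional Einstein addition $(s,t)\mapsto(s+t)/(1+st)$, which is manifestly symmetric.

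For the converse, I would assume $u$ and $v$ are linearly independent, so that $\{u,v\}$ is a basis of the plane they span, and compare the coefficients of $u$ and of $v$ on the two sides of the proposed equality $u\oplus v=v\oplus u$. Abbreviating $a=\sqrt{1-|u|^2}$ and $b=\sqrt{1-|v|^2}$, the formula gives the coefficients directly; I expect the two resulting equations, after subtraction, to collapse to $a=b$, i.e.\ $|u|=|v|$, and back-substitution should yield $(u,v)=a^2-1=-|u|^2=-|u|\,|v|$. The equality case of the Cauchy--Schwarz inequality then forces $v=-u$, contradicting linear independence. The only mild obstacle is organising the coefficient comparison so that the key identity $a=b$ emerges cleanly rather than being obscured by the four-term numerators; this is a bookkeeping issue, not a conceptual one.
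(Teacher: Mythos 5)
Your argument is correct, and for the harder direction it takes a genuinely different route from the paper. For the ``if'' direction both you and the paper reduce to a diameter of $\B^n$: the paper argues synthetically that $w\mapsto u\oplus w$ and $w\mapsto v\oplus w$ are commuting translations of the same hyperbolic line, whereas you identify the restriction of $\oplus$ to the diameter with the one-dimensional addition $(s,t)\mapsto (s+t)/(1+st)$; these are essentially the same idea. For the ``only if'' direction the paper stays inside hyperbolic geometry: it shows that if $u\oplus v=v\oplus u=w$ then the triangles $u0v$ and $vwu$ are congruent, so the quadrilateral $u0vw$ would have angle sum $2\pi$, which is impossible in the hyperbolic plane. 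You instead compare coefficients in the basis $\{u,v\}$, and the bookkeeping does come out cleanly: writing $a=\sqrt{1-|u|^2}$, $b=\sqrt{1-|v|^2}$, $c=(u,v)$, the two coefficient equations are $c=(b-1)(1+a)$ and $c=(a-1)(1+b)$, whose difference gives $a=b$, whence $c=a^2-1=-|u|^2=-|u|\,|v|$ and Cauchy--Schwarz forces dependence (indeed $v=-u$), a contradiction. (Note that linear independence guarantees uniqueness of the coefficients and that $u,v\ne 0$, and $1+c>0$ since $|c|<1$, so all denominators are harmless.) Your computation is more elementary and self-contained --- it needs no hyperbolic geometry at all --- while the paper's argument is coordinate-free and fits the geometric viewpoint used throughout the rest of the paper (notably in Corollary~\ref{kollin} and Theorem~\ref{aut}).
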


\begin{proof}
If $u$ and $v$ are linearly dependent, then they belong to a diameter of the
ball $\B^n$. This diameter represents a line $L$ in hyperbolic space. The  $L\to L$ maps
$w\mapsto u\oplus w$ and $w\mapsto v\oplus w$ are translations of $L$,
so they commute. Hence, $u\oplus v=u\oplus (v\oplus 0)=v\oplus (u\oplus
0)=v\oplus u$ as claimed.

If $u$ and $v$ are linearly independent, then they span a two-dimensional
plane, which intersects $\B^n$ in a  disc. This disc represents a hyperbolic
plane. Hyperbolic isometries preserve angles. Thus, the halfline
$u(u\oplus v)$  forms the same angle with the halfline $0u$ as $0v$
does. Hence, $$\angle 0u(u\oplus v)=\pi- \angle u0v.$$  Similarly,
 $$\angle 0v(v\oplus u)=\pi- \angle u0v.$$
If, by way of contradiction,  we have $u\oplus v=v\oplus u=w$, then corresponding sides of the triangles
$u0v$ and $vwu$ have equal length, making the two triangles congruent and
implying $$\angle u0v=\angle vwu.$$ But then the four angles of the
quadrilateral $u0vw$ sum to $2\pi$, which is impossible in the hyperbolic plane.
\end{proof}

\begin{Cor}\label{kollin} The points $x,y,z\in \B^n$ are collinear if and only if
\begin{equation}\label{coll} ((-x)\oplus y)\oplus  ((-x)\oplus z)=((-x)\oplus
  z)\oplus ((-x)\oplus y).\end{equation}\end{Cor}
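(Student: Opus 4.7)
The plan is to reduce the collinearity question to linear dependence by applying a hyperbolic isometry, and then invoke Proposition \ref{comm}.

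First I would observe that the map $T\colon w\mapsto(-x)\oplus w$ is an isometry of hyperbolic $n$-space, as noted in the text. In particular it sends collinear triples to collinear triples. Applying $T$ to the triple $(x,y,z)$ yields $\bigl((-x)\oplus x,\ (-x)\oplus y,\ (-x)\oplus z\bigr) = \bigl(0,\ (-x)\oplus y,\ (-x)\oplus z\bigr)$, where I have used the identity $(-u)\oplus(u\oplus v)=v$ with $v=0$ to rewrite $(-x)\oplus x$ as $0$. Hence $x,y,z$ are collinear if and only if the points $0$, $(-x)\oplus y$, $(-x)\oplus z$ are collinear.

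Next I would use the fact that in the Cayley--Klein--Beltrami model the hyperbolic lines are precisely the chords of $\B^n$, so a hyperbolic line passing through the origin is a Euclidean diameter. Therefore $0$, $p$, $q$ are collinear in hyperbolic space if and only if $p$ and $q$ are linearly dependent in the usual vector-algebraic sense in $\R^n$ (with the cases where some of the points coincide handled trivially, since linear dependence also holds when one of the vectors is $0$).

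Finally I would apply Proposition \ref{comm} to $u=(-x)\oplus y$ and $v=(-x)\oplus z$: these two vectors are linearly dependent precisely when they commute under $\oplus$, i.e.\ when equation \eqref{coll} holds. Chaining the three equivalences yields the corollary. I do not expect any real obstacle; the only point that deserves a line of justification is why $T$ sends $x$ to $0$, which follows from the identities supplied in the preceding discussion.
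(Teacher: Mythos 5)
Your proposal is correct and follows essentially the same route as the paper: apply the isometry $w\mapsto(-x)\oplus w$ to reduce to collinearity with $0$, identify that with linear dependence via the Cayley--Klein--Beltrami model, and conclude by Proposition~\ref{comm}. The extra details you supply (why $x$ maps to $0$, and why a hyperbolic line through the origin is a diameter) are correct and harmless.
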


\begin{proof}
In the Cayley--Klein--Beltrami model, lines of hyperbolic space are
represented by chords of the ball $\B^n$. Thus, $x$, $y$, $z$ are collinear in
the ordinary sense of Euclidean geometry if and only if they are collinear as
points of hyperbolic space.

The map $w\mapsto (-x)\oplus w$ is an isometry of hyperbolic space, so it
preserves collinearity. Thus $x$, $y$ and $z$ are collinear if and only if
$0$, $(-x)\oplus y$ and $(-x)\oplus z$ are. The claim now follows from
Proposition~\ref{comm}.
\end{proof}

%We now turn to the central concepts of this paper:
\section{Endomorphisms and
automorphisms}  An endomorphism of the $n$-dimensional Einstein gyrogroup is a
map $f:\B^n\to \B^n$ such that \begin{equation}\label{endo}f(u\oplus v)=f(u)\oplus f(v)\end{equation} for all $u, v\in
\B^n$. An automorphism is a bijective endomorphism.

Note that any endomorphism $f$ satisfies $f(0)=0$.  Indeed, $0\oplus 0=0$, whence
$f(0)\oplus f(0)=f(0)=f(0)\oplus 0$.  But $v\mapsto f(0)\oplus v$ is
bijective, so $f(0)=0$.

When $n=1$, the Einstein gyrogroup is a group. It is isomorphic to the
additive group $(\R, +)$ of real numbers. Endomorphisms of this group have been
extensively studied, they go under the name of additive functions. Most of
them are non-continuous. Moreover, most of the automorphisms of $(\R, +)$ are
also non-continuous. In fact, the continuous endomorphisms are precisely the linear
functions $x\mapsto ax:\R\to\R$ with fixed $a\in \R$, and there are many further
automorphisms, let alone endomorphisms.

Henceforth, we assume $n\ge 2$.

\begin{Th}\label{aut} For $n\ge 2$, automorphisms of the Einstein gyrogroup $(\B^n,
  \oplus)$ are precisely the restrictions to $\B^n$ of the orthogonal
  transformations of $\R^n$.
\end{Th}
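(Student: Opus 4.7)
The ``if'' direction is straightforward. For any $T\in O(n)$, the Euclidean inner product $(u,v)$ and norm $|u|$ appearing in the defining formula of $\oplus$ are preserved by $T$, so $T(u\oplus v)=T(u)\oplus T(v)$; since $T$ also bijects $\B^n$ with itself, $T|_{\B^n}$ is an automorphism.

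For the ``only if'' direction, let $f$ be an automorphism. The relation $f(0)=0$ has already been observed, and the identity $u\oplus(-u)=0$ together with injectivity yields $f(-u)=-f(u)$. The decisive observation, furnished by Corollary \ref{kollin}, is that $f$ preserves Euclidean collinearity: the equation \eqref{coll} is expressed purely in terms of $\oplus$, so collinearity of $x,y,z\in\B^n$ forces collinearity of $f(x),f(y),f(z)$. In particular, $f$ sends each diameter of the ball to a diameter.

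The heart of the argument is to deduce from this collinearity preservation that $f$ is the restriction to $\B^n$ of a linear bijection $T:\R^n\to\R^n$. My plan is to invoke a version of the fundamental theorem of affine geometry adapted to the open convex set $\B^n$: for $n\ge 2$, since $\R$ admits no non-trivial field automorphisms, every bijection of $\B^n$ that fixes $0$ and sends line segments to line segments should be the restriction of some $T\in GL_n(\R)$. Once $T$ is in hand, its continuity combined with $T(\B^n)=\B^n$ forces $T$ to map $\overline{\B^n}$ onto itself, and hence the unit sphere $S^{n-1}$ onto itself; therefore $T\in O(n)$.

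The linearity step is where I expect the main difficulty. For $n\ge 3$, I would descend to the projective space $\PP^{n-1}$ of diameters -- three distinct coplanar diameters being characterized by the existence of three non-zero collinear points, one on each, a condition preserved by $f$ -- and then apply the classical fundamental theorem of projective geometry. For $n=2$ the target $\PP^1$ is too flexible, and one must instead argue directly within $\B^2$, using non-diameter chords and Artin-style incidence arguments. An alternative route, avoiding the appeal to projective rigidity, would be to establish $|f(u)|=|u|$ directly, since combined with the identity $(1+(u,v))^2(1-|u\oplus v|^2)=(1-|u|^2)(1-|v|^2)$ and the positivity of $1+(u,v)$ it yields $(f(u),f(v))=(u,v)$, from which orthogonality is immediate; but this norm preservation seems to require essentially the same incidence machinery. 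Once $T$ has been produced, orthogonality follows as described.
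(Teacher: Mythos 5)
Your outline coincides with the paper's: both hinge on Corollary~\ref{kollin} to show that an automorphism preserves collinearity, followed by a rigidity theorem that upgrades a collineation to a classical transformation. But the decisive rigidity step is left as a plan in your write-up, and the statement you propose to invoke is not correct as formulated. A ``fundamental theorem of affine geometry adapted to the open convex set $\B^n$'' cannot conclude that a segment-preserving bijection fixing $0$ lies in $GL_n(\R)$: the correct conclusion for a collineation of a proper open convex set is that it is the restriction of a \emph{projective} transformation, and fixing $0$ does not force a projective map to be linear (in inhomogeneous coordinates $x\mapsto x/(1+(b,x))$ fixes $0$ and carries segments to segments near $0$ without being linear; the hyperbolic translations $v\mapsto u\oplus v$ themselves are fractional-linear collineations of $\B^n$ that are not affine). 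To get down to $O(n)$ you must use, in addition to $f(0)=0$, that the projective transformation maps $\B^n$ onto itself --- i.e.\ that it is a hyperbolic isometry fixing the centre of the Klein model. This is exactly the paper's route: it cites the ``fundamental theorem of hyperbolic geometry'' \cite{J, LW, S, Y} --- every collineation of hyperbolic $n$-space, $n\ge 2$, is an isometry --- which is precisely the local-to-global rigidity statement your sketch (projective space of diameters for $n\ge 3$, an ad hoc incidence argument for $n=2$) would have to reconstruct from scratch; extending a collineation from an open convex set is genuinely harder than the classical affine/projective fundamental theorems on all of $\R^n$ or $\PP^{n-1}$, and is the whole content of those references.

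Two smaller points. First, these theorems want $f$ to map lines \emph{onto} lines; the paper gets this by noting that $f^{-1}$ is also an automorphism and hence also preserves collinearity, whereas your proposal records only the forward implication for $f$. Second, your ``if'' direction, the identity $(1+(u,v))^2(1-|u\oplus v|^2)=(1-|u|^2)(1-|v|^2)$, and the final descent from an isometry fixing $0$ to an orthogonal transformation are all correct and match the paper; only the middle rigidity step is missing.
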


\begin{proof}
Orthogonal transformations of $\R^n$ preserve the inner product and therefore
the Euclidean norm, so they map $\B^n$ bijectively onto itself and satisfy
\eqref{endo} for all $u$ and $v$.

Conversely, if $f$ is an automorphism,
then so is its inverse $f^{-1}$.
By Corollary~\ref{kollin}, both $f$ and $f^{-1}$ map collinear points to
collinear points. I.e., $f$ --- as a self-map of hyperbolic space ---  maps
any line onto a  line. In other words, $f$ is a collineation of hyperbolic
space.  By a well-known result sometimes referred to as the fundamental
theorem of hyperbolic geometry \cite{J, LW, S, Y}, any collineation is an
isometry  for $n\ge 2$. So $f$ is an isometry. %But $$0=f(f^{-1}(0)\oplus
%0)=0\oplus f(0)  =f(0).$$
It is well known that in the Cayley--Klein--Beltrami model, any isometry of hyperbolic $n$-space fixing 0 is represented  by the restriction of an
orthogonal transformation.
\end{proof}

We now turn to endomorphisms. We urge the reader to solve
\begin{?}
For $n\ge 2$, is every endomorphism of the Einstein gyrogroup continuous?
\end{?}

Meanwhile, we wish to classify continuous endomorphisms. For $n=3$, which is the most
relevant to physics, this was done by L.\ Moln\'ar and D.\ Virosztek
\cite{MV}, while the general case was posed by them as an open problem.
Their result relies on a  chain of reinterpretations of $(\B^3,\oplus)$. The first
step in the chain is an observation of S.\ Kim \cite{K}: $(\B^3,\oplus)$ is
bicontinuously isomorphic to $(\D, \odot)$, where $\D$ is the set of 2-square
regular density matrices and $A\odot B$ is $\sqrt AB\sqrt A$ divided by its trace.
Moln\'ar and Virosztek show that this in turn is bicontinuously isomorphic to
$(\PP_2^1, \boxdot)$, where $\PP_2^1$ is the set of 2-square positive definite
matrices with determinant 1, and $A\boxdot B=\sqrt AB\sqrt A$. Then they invoke a  result from their previous paper \cite[Theorem 1]{MVJordan} and deduce from it the classification of the continuous endomorphisms of
$(\PP_2^1, \boxdot)$.

From Theorem~\ref{aut} of the present paper,  using the bicontinuous isomorphisms mentioned above (but in the opposite direction), we  infer

\begin{Cor} Every  automorphism of $(\D, \odot)$ or  $(\PP_2^1, \boxdot)$ is continuous.
\end{Cor}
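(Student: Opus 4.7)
The plan is to transport any given automorphism back to an automorphism of the three-dimensional Einstein gyrogroup, where Theorem~\ref{aut} forces continuity automatically. Let $\varphi\colon (\B^3,\oplus)\to (\D,\odot)$ and $\psi\colon (\D,\odot)\to (\PP_2^1,\boxdot)$ denote the bicontinuous isomorphisms recalled in the text (from S.~Kim and from Moln\'ar--Virosztek respectively), so that $\psi\circ\varphi$ is a bicontinuous isomorphism $(\B^3,\oplus)\to (\PP_2^1,\boxdot)$.

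Suppose first that $f\colon \D\to\D$ is an automorphism of $(\D,\odot)$. Conjugation by $\varphi$ yields the map $g:=\varphi^{-1}\circ f\circ \varphi\colon \B^3\to\B^3$, which is a bijection because $\varphi$ and $f$ are, and which satisfies $g(u\oplus v)=g(u)\oplus g(v)$ because $\varphi$ intertwines $\oplus$ with $\odot$. Thus $g$ is an automorphism of $(\B^3,\oplus)$. Since $n=3\ge 2$, Theorem~\ref{aut} applies and identifies $g$ with the restriction of an orthogonal transformation of $\R^3$; in particular $g$ is continuous. Consequently $f=\varphi\circ g\circ \varphi^{-1}$ is a composition of continuous maps, hence continuous.

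The same argument applies verbatim to an automorphism $h\colon \PP_2^1\to\PP_2^1$ of $(\PP_2^1,\boxdot)$, using the bicontinuous isomorphism $\psi\circ\varphi$ in place of $\varphi$: the conjugate $(\psi\circ\varphi)^{-1}\circ h\circ (\psi\circ\varphi)$ is an automorphism of $(\B^3,\oplus)$, therefore continuous by Theorem~\ref{aut}, and continuity of $h$ follows.

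There is no genuine obstacle: the whole argument is a transport of structure, and the only non-trivial ingredient — that automorphisms of $(\B^3,\oplus)$ are necessarily continuous — is precisely what Theorem~\ref{aut} delivers. The one thing worth being explicit about is that we really need \emph{bi}continuity of $\varphi$ and $\psi$, since we use continuity of the inverse isomorphisms to conclude continuity of $f$ and $h$ from continuity of their conjugates.
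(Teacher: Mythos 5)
Your proposal is correct and is exactly the argument the paper intends: the corollary is stated right after the sentence ``using the bicontinuous isomorphisms mentioned above (but in the opposite direction), we infer,'' and your conjugation-by-$\varphi$ (resp.\ $\psi\circ\varphi$) argument is just that transport of structure spelled out in detail, with the same key input (Theorem~\ref{aut}) and the same correct emphasis on needing bicontinuity of the isomorphisms.
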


Turning to arbitrary dimension, we have

\begin{Th}\label{end}
 For $n\ge 2$, continuous endomorphisms of the Einstein gyrogroup $(\B^n,
  \oplus)$ are precisely the restrictions to $\B^n$ of orthogonal
  transformations of $\R^n$ and the map that sends everything to 0.
\end{Th}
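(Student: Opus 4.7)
The plan is to show that any non-zero continuous endomorphism $f$ is automatically an automorphism; Theorem~\ref{aut} then identifies it with the restriction of an orthogonal transformation. The central object is $\ker f:=f^{-1}(0)$, which by continuity and the endomorphism property is a closed sub-gyrogroup of $\B^n$ containing $0$, and the goal is to prove that $\ker f$ is either $\{0\}$ or all of $\B^n$.

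By Proposition~\ref{comm}, $f$ sends linearly dependent pairs to linearly dependent pairs, so each diameter $D$ of $\B^n$ (open chord through $0$) maps into a single diameter. Parameterizing $D=\{(\tanh t)e:t\in\R\}$ by the rapidity $t$, the restriction $f|_D$ becomes a continuous endomorphism of $(\R,+)$, hence of the form $t\mapsto c_D t$ for some $c_D\in\R$. Thus $D\cap\ker f$ is either $\{0\}$ or all of $D$, so $\ker f$ is a union of diameters. The next ingredient is that $\ker f$ is also closed under $2$-plane spans: if two distinct diameters $D_1,D_2\subseteq\ker f$ have orthonormal directions $e_1,e_2$, then Einstein's formula yields
\[(ae_1)\oplus\bigl((b/\sqrt{1-a^2})e_2\bigr)=ae_1+be_2\qquad(a^2+b^2<1),\]
whence $f(ae_1+be_2)=0\oplus 0=0$ throughout this $2$-disc. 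Iterating shows $\ker f=W\cap\B^n$ for the linear subspace $W:=\mathrm{span}(\ker f)\subseteq\R^n$.

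The main obstacle is ruling out the intermediate case $\{0\}\subsetneq W\subsetneq\R^n$. Suppose such a $W$ exists. Choose $u\in\B^n\setminus W$ (so $f(u)\ne 0$) and a unit $e\in W$; since $u\notin\mathrm{span}(e)\subseteq W$, a unit $e'\perp e$ exists in $\mathrm{span}(u,e)$, and $e'\notin W$ (else $u\in\mathrm{span}(e,e')\subseteq W$). The plan is then to exploit the two decompositions
\[ae+be'=(ae)\oplus\bigl((b/\sqrt{1-a^2})e'\bigr)=(be')\oplus\bigl((a/\sqrt{1-b^2})e\bigr)\qquad(a^2+b^2<1).\]
Applying $f$ and using that $f(te)=0$ for every $t\in(-1,1)$, both sides collapse to
\[f\bigl((b/\sqrt{1-a^2})e'\bigr)=f(be').\]
For each fixed $b\in(0,1)$, as $a$ runs over $[0,\sqrt{1-b^2})$ the quantity $b/\sqrt{1-a^2}$ sweeps out $[b,1)$, so $f(se')$ is constant on $[b,1)$. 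Since $b$ is arbitrary, $f(se')$ is constant on $(0,1)$; continuity at $0$ forces this constant to be $0$, and the interval $(-1,0)$ is handled symmetrically. Hence the diameter through $e'$ lies in $\ker f$, contradicting $e'\notin W$. So $W=\{0\}$ or $W=\R^n$; in the latter case $f\equiv 0$.

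If $W=\{0\}$, then $f$ is injective: $f(u)=f(v)$ yields $f((-u)\oplus v)=0$, hence $(-u)\oplus v=0$, hence $u=v$. Each diameter $D$ now maps bijectively onto the diameter $f(D)$ (since $f|_D$ is multiplication by the non-zero scalar $c_D$ in rapidity), so $f$ induces a continuous injective self-map $\Phi$ on the space of diameters $\R P^{n-1}$. Because $n\geq 2$ makes $\R P^{n-1}$ a compact connected manifold, invariance of domain promotes $\Phi$ to a homeomorphism, hence a surjection. Consequently every diameter of $\B^n$ is contained in $f(\B^n)$, so $f$ is surjective, and thus an automorphism. Theorem~\ref{aut} identifies it with the restriction of an orthogonal transformation, completing the proof.
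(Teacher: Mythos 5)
Your proof is correct in substance, but it takes a genuinely different route from the paper's. In the non-injective case the paper, like you, first shows that the kernel contains a full diameter $L$, but then argues geometrically: $f$ is constant on the left translates $a\oplus L$ (chords parallel to $L$) and on the right translates $L\oplus b$ (hypercycles, i.e.\ half-ellipses with major axis $L$), and these two families force $f$ to be constant, hence zero, on every open half-disk bounded by $L$. You replace this with a direct computation from Einstein's formula: the two orthogonal decompositions of $ae+be'$ together with the sweep $b/\sqrt{1-a^2}\in[b,1)$ show that the kernel absorbs every diameter orthogonal to one it already contains, and the $2$-disc identity then finishes; this is more elementary and self-contained, since no hyperbolic-geometric description of the translates is needed. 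In the injective case the paper simply notes that $f$ is open by invariance of domain, so its image contains a neighbourhood of $0$, which generates $\B^n$ under $\oplus$; your detour through a continuous injection of $\R P^{n-1}$ is valid but heavier than necessary. One step you should tighten: ``iterating shows $\ker f=W\cap\B^n$'' does not follow from closure under \emph{orthogonal} $2$-disc spans alone --- a union of two non-orthogonal diameters is closed under that operation vacuously, yet spans a plane. The repair is already contained in your next paragraph: take $e$ to be the direction of some diameter lying in $\ker f$ (rather than an arbitrary unit vector of $W$); the sweeping argument then shows that every $e'\perp e$ also yields a kernel diameter, the $2$-disc identity fills in the plane they span, and the intermediate subspace $W$ is never actually needed.
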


For $n=3$, this recovers the classifications of continuous endomorphisms of $(\B^3,\oplus)$, $(\D, \odot)$ and
$(\PP_2^1, \boxdot)$ given by Moln\'ar and Virosztek in \cite{MV}.

\begin{proof} It is clear that orthogonal transformations and the identically
  zero map are continuous endomorphisms.

Conversely, let $f$ be an arbitrary continuous endomorphism.

If $f$ is injective, then it is an open map, so its image contains a
neighbourhood of 0. But this neighbourhood generates $\B^n$ under $\oplus$, and
the image of $f$ is closed under $\oplus$, so $f$ must be surjective, i.e., $f$
is an automorphism. The claim now follows from Theorem~\ref{aut}.

If $f$ is not injective, then we have a  pair $u\ne v$ with
$f(u)=f(v)=f(u\oplus ((-u)\oplus v))
=f(u)\oplus f((-u)\oplus v)$. Let $x=(-u)\oplus v$, then $f(x)=0$ but
$x\ne 0$.  The diameter $L$ passing through $x$ is a subgroup isomorphic to
$(\R,+)$. We may choose an isomorphism such that $x$ corresponds to 1. It is
easy to see that $f(y)=0$ for every point of the diameter $L$ that corresponds to
a rational number. But then, by continuity, $f(y)=0$ for all $y$ on the
diameter $L$. Thus, $f$ is constant on sets of the form $a\oplus L$ and
$L\oplus b$. The former sets are lines in hyperbolic $n$-space, i.e., chords
of the ball $\B^n$. The chord $a\oplus L$ passes through $a$ and is parallel to $L$ if $a$ is orthogonal to $L$.  The latter
sets, when $b\notin L$,  are hypercycles in hyperbolic $n$-space, or half-ellipses in $\B^n$.
The half-ellipse  $L\oplus b$ connects the two ends of its major axis $L$ and passes
through $b$. It follows that $f$ is constant on any two-dimensional open half-disk
whose  boundary diameter is $L$. By continuity, $f=0$ everywhere.
\end{proof}

\section*
{Acknowledgements}  I am grateful to Lajos Moln\'ar and D\'aniel Virosztek
 for useful  conversations.

\end{document}